\documentclass[lualatex,11pt]{article}

\usepackage{amssymb}
\usepackage{amsthm}
\usepackage{float}
\usepackage{tikz,pgf}

\newtheorem{thm}{Theorem}
\newtheorem{lem}{Lemma}
\newtheorem{rmk}{Remark}

\usepackage{graphicx}	
\begin{document}

\begin{center}
 {\large A quotient-lifting approach to the Hamiltonicity of the cylindrical 5-puzzle graph}

\bigskip
 Taizo Sadahiro
\end{center}

\begin{abstract}
In this short note, we construct an explicit Hamiltonian cycle in the state graph of the 5-puzzle on a cylindrical 
$2\times 3$ grid, a graph with $720$ vertices. 
The cycle is described by a short symbolic sequence of $48$ moves over the alphabet 
$\{{\mathtt L}, {\mathtt R}, {\mathtt V}\}$, repeated $15$ times, 
which can be verified directly. 
We also find a shorter 24-move sequence whose repetition yields a 2-cycle cover, 
which can be spliced into a Hamiltonian path. These constructions arise naturally 
from a general method: lifting Hamiltonian cycles from a quotient graph 
under the action of the puzzle’s symmetry group.
 The method produces compact, human-readable cycle encoding.

\end{abstract}

\section{Introduction}

The state graph of the classical sliding block puzzle, formed by 
placing $mn-1$ numbered tiles and one blank on a $m \times n$
rectangular grid, has long been a natural object of study 
in combinatorics and computer science. 
For $m=2$ and $n=3$, 
when the grid is modified to allow the wraparound move
(i.e., identifying opposite edges horizontally), 
we obtain a \emph{cylindrical} $5$-puzzle, whose state graph becomes 
a {\em cubic} graph on $720$ vertices representing all reachable 
configurations under legal moves. 
\footnote{A previous study of the classical (non-cylindrical) 5-puzzle 
appeared in~\cite{hanaokasadahiro23}, where different geometric structure
was uncovered.}

In this note, we construct an explicit Hamiltonian cycle 
in this graph, 
encoded by a short symbolic sequence over the move 
alphabet $\{{\mathtt L}, {\mathtt R}, {\mathtt V}\}$, 
where ${\mathtt L}, {\mathtt R}$, and ${\mathtt V}$ 
denote left, right, and vertical shifts of the blank tile. 
Remarkably, a $48$-move word, repeated $15$ times, 
traces a Hamiltonian cycle through all 720 configurations. 
We also identify a $24$-move word whose repetition yields a 
$2$-cycle cover, which can be spliced into a Hamiltonian path by 
simple edge replacements. 
As we show later, this symbolic sequence is so simple
that one can memorize and verifiable without the help
of computers. 

Our construction emerges from a general technique: we first reduce the state graph by symmetry to a smaller \emph{quotient graph}, where backtracking search can efficiently find a Hamiltonian cycle. We then \emph{lift} this cycle to the full state graph. This quotient--lifting method yields compressed symbolic encodings and appears to be effective in a variety of highly symmetric state graphs, suggesting the presence of a combinatorial ``Hamiltonian grammar.''
Although similar ideas may be implicitly used in related contexts, 
the author is not aware of previous work that formalizes 
this quotient-lifting approach in the context of Hamiltonian cycle construction.

%

The question of whether a given highly symmetric graph has a Hamiltonian cycle 
is a recurring theme in graph theory, exemplified by the Lovász conjecture
 on vertex-transitive graphs \cite{lovasz1969combinatorial}. 
While our focus lies on Cayley graphs arising from a puzzle,
these examples continue the exploration of 
Hamiltonicity in  meaningful settings.

\section{Preliminaries}

Let $\Gamma$ be a finite undirected simple graph with the vertex set $V(\Gamma)$ 
of cardinality $n+1$ and 
the edge set $E(\Gamma)$. 
A {\em position} of the {\em sliding block puzzle}
defined on $\Gamma$ is a bijection $f: V(\Gamma)\to \{1,2,\ldots, n, 0\}$.
We say $v\in V(\Gamma)$ is {\em blank} or {\em unoccupied} in
a position $f$ if $f(v)=0$, and hence, 
the vertex $f^{-1}(0)$ is blank.
A position $f$ is transformed into another position $g$  by a
{\em move} if there exist two mutually
adjacent vertices $v,w$ in $\Gamma$, such that, $v$ or $w$
is blank in $f$, and
\[
 g = f\circ (v,w),
\]
where $(v,w)$ denotes the transposition of $v$ and $w$.

Let ${\rm puz}(\Gamma)$ be the graph whose vertex set consists of
the positions of the puzzle.
Two vertices (or positions) $f$ and $g$ of ${\rm puz}(\Gamma)$
are connected by an edge $\{f,g\}$
if and only if there is a move transforming $f$ into $g$.

Let $H$ be a finite group and $S\subset H$ 
be its generating set. 
Then the Cayley graph ${\rm Cay}(H, S)$ is defined as follows:
The Cayley graph ${\rm Cay}(H, S)$ is a directed graph whose vertex set is $H$ 
and there is an edge from $h_1\in H$ to $h_2\in H$
if and only if there is some $s\in S$ such that $h_1s=h_2$.

\section{State graph as a Cayley graph}

\begin{figure}[t]
\centering
\begin{tikzpicture}[scale=1.5, every node/.style={circle,draw,inner sep=1.5pt}]
  \foreach \i in {0,1}{
    \foreach \j in {0,1,2}{
      \node[inner sep = 0.1] (v\i\j) at (\j,-\i) {(\i,\j)};
    }
  }
  \foreach \x in {0,1}{
    \draw (v\x0)--(v\x1);
    \draw (v\x1)--(v\x2);
  }
  \foreach \y in {0,1,2}{
    \draw (v0\y) -- (v1\y);
  }
 \draw[out=120, in=60] (v00) to (v02);
 \draw[out=-120, in=-60] (v10) to (v12);

\end{tikzpicture}
\caption{The cylindrical grid $\Gamma_{2,3}$. }
\end{figure}

In this note, we consider the sliding block puzzle defined
on the $2$-by-$3$ cylindrical grid graph $\Gamma_{2,3}$, which
is defined as follows.
The vertex set $V(\Gamma_{2,3})$ is ${\mathbb Z}/2{\mathbb Z}\times {\mathbb Z}/3{\mathbb Z}$,
and there is exactly one edge connecting $v$ and $w\in {\mathbb Z}/2{\mathbb Z}\times {\mathbb Z}/3{\mathbb Z}$,
if and only if,
\[
 v-w \in \left\{\pm(1,0), \pm(0,1)\right\} \in {\mathbb Z}/2{\mathbb Z}\times {\mathbb Z}/3{\mathbb Z}.
\]
We call this sliding block puzzle, the {\em cylindrical $5$-puzzle}.

Let $f$ be a position in the cylindrical $5$-puzzle,
which has blank  at $b=(x,y)\in {\mathbb Z}/2{\mathbb Z}\times {\mathbb Z}/3{\mathbb Z}$.
Then we obtain a permutation
\[
\sigma_f = 
 f((0,1)+b) 
 f((0,2)+b) 
 f((1,0)+b) 
 f((1,1)+b) 
 f((1,2)+b) \in S_{5},
\]
where $S_5$ denotes the symmetric group of degree five,
which we define to be the $S_5$-{\em part} of $f$.
Since $\Gamma_{2,3}$ is a non-bipartite graph,
by applying the Wilson's celebrated theorem on general
sliding block puzzle \cite{wilson},
we see that the {\em state space} is
\[
 G=S_5\times{\mathbb Z}/2{\mathbb Z}\times{\mathbb Z}/3{\mathbb Z}.
\]
The $\left({\mathbb Z}/2{\mathbb Z}\times{\mathbb Z}/3{\mathbb Z}\right)$-part
determines the blank position
and $S_5$-part the relative configuration.
Thus each position $f$ can be uniquely encoded as an
element $(\sigma_f, x_f, y_f)\in G$.
We sometimes identify $f$ with its encoding $(\sigma_f, x_f, y_f)\in G$.

Let the {\em right move} be the move which
exchanges the blank vertex $b$ with $b+(0,1)$.
Then, by applying  the right move to a position $f$,
we obtain the position $f'$ whose  $S_5$-part is
\[
 \sigma_{f'}=
 f((0,2)+b)f((0,1)+b)f((1,1)+b)f((1,2)+b)f((1,0)+b)
 =
 \sigma_f \mathtt{R},
\]
where ${\mathtt R}=(1,2)(3,4,5)\in S_5$.
Therefore, we have
$f'=f\,\hat{{\mathtt R}}$ where
$\hat{\mathtt R}=({\mathtt R},0,1)$.

Let the {\em left move} be the move which exchanges the blank vertex $b$ 
with $b+(0,2)$. Then, by applying the left move
to a position $f$, we obtain a new position $f'$
whose $S_5$-part is
\[
 \sigma_{f'} =
 f((0,2)+b)f((0,1)+b)f((1,2)+b)f((1,0)+b)f((1,1)+b)
 = \sigma_{f}{\mathtt L},
\]
where ${\mathtt L}=(1,2)(3,5,4)\in S_5$.
Thus we have 
$f'=f\,\hat{{\mathtt L}}$ where
$\hat{{\mathtt L}}=({\mathtt L}, 0, 2)$.

Let the {\em vertical move}
be the move which exchange the blank vertex $b$
with $b+(1,0)$.
Then, we have
\[
 \sigma_{f'} =
 f((1,1)+b)f((1,2)+b)f((1,0)+b)f((0,1)+b)f((0,2)+b)
 = \sigma_{f}{\mathtt V},
\]
where ${\mathtt V}=(1,4)(2,5)\in S_5$.
Thus we have $f'=f\,\hat{{\mathtt V}}$ where
$\hat{\mathtt V}=({\mathtt V}, 1, 0)$.

Thus the state graph ${\rm puz}(\Gamma_{2,3})$
of the cylindrical $5$-puzzle can be expressed as a Cayley graph
\[
 {\rm puz}={\rm puz}(\Gamma_{2,3})={\rm Cay}(G, \{\hat{\mathtt{\mathtt L}}, \hat{\mathtt{\mathtt R}}, \hat{\mathtt{\mathtt V}}\}).
\]

\section{Hamiltonian path and cycle}
In this section, 
we show a Hamiltonian path and a Hamiltonian cycle
in ${\rm puz}(\Gamma_{2,3})$ explicitly.
To prove that the path we will construct is a Hamiltonian path, 
we first construct a Hamiltonian cycle in
the Cayley graph ${\rm Cay}(S_5,\{{\mathtt R},{\mathtt L},{\mathtt V}\})$.

Let $H$ be a group.
For a word $w=(w_1, w_2, \ldots, w_k)\in H^{k}$,
we denote its product by $\phi_H(w)$, that is,
\[
 \phi_H(w_1,w_2,\ldots, w_k) =
 w_1 w_2 \cdots w_k \in H.
\]

\begin{lem}
 \label{lem:hamS5}
Let \( s = (s_1, s_2, \ldots, s_{12}) \in \{\mathtt{L}, \mathtt{R}, \mathtt{V}\}^{12} \) be the sequence
\begin{equation}
\label{eq:s}
s = (\mathtt{V}, \mathtt{L}, \mathtt{V}, \mathtt{R}, 
\mathtt{V}, \mathtt{L}, \mathtt{V}, \mathtt{R}, \mathtt{V}, \mathtt{R}, \mathtt{V}, \mathtt{L}), 
\end{equation}
and let \( s' = (s_{13}, s_{14}, \ldots, s_{24}) \) be its perturbation defined by
\begin{equation}
 \label{eq:s'}
s_{12+i} = s_i \quad \mbox{for } i = 1, 2, \ldots, 11, \qquad s_{24} = \mathtt{R}.
\end{equation}
 Define $w$ as the five-fold repetition of $(s_1,s_2,\ldots, s_{24})$,
 that is,
 \begin{equation}
  \label{eq:w}
 w  = (s_1,\ldots, s_{24}, s_1, \ldots, s_{24}, s_1, \ldots, s_{24},
 s_1, \ldots, s_{24}, s_1, \ldots, s_{24}
 )
 \in S_5^{120}.
 \end{equation}
 Then the sequence 
 \[
  C = ({\rm id}, g_1, g_2, \ldots, g_{120}) \in S_5^{121}
 \]
 is a Hamiltonian cycle in the
 Cayley graph ${\rm Cay}(S_5,\{{\mathtt L}, {\mathtt R}, {\mathtt V}\})$, where
 $g_i=\phi_{S_5}(w_1,w_2,\ldots, w_i)$.
\end{lem}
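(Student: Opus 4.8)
The plan is to separate two requirements for $C$ to be a Hamiltonian cycle and to handle each by exploiting the fivefold-repetition structure of $w$. Adjacency of consecutive vertices is immediate, since $g_i = g_{i-1}\,w_i$ with $w_i \in \{\mathtt{L}, \mathtt{R}, \mathtt{V}\}$, so every step $g_{i-1} \to g_i$ is a Cayley edge, as is the closing step $g_{119} \to g_{120}$. Thus only two things remain: \emph{closure} ($g_{120} = \mathrm{id}$, so the listed endpoint truly returns to the start) and \emph{distinctness} of the $120$ vertices $g_0 = \mathrm{id}, g_1, \dots, g_{119}$, which by $|S_5| = 120$ is equivalent to their exhausting $S_5$. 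To organize both, I would record the block structure: writing $h_j := \phi_{S_5}(s_1, \dots, s_j)$ for $0 \le j \le 23$ (so $h_0 = \mathrm{id}$) and $P := \phi_{S_5}(s_1, \dots, s_{24})$, the fact that $w$ concatenates five identical copies of $(s_1, \dots, s_{24})$ gives
\[
 g_{24k + j} = P^{k}\, h_j \qquad (0 \le k \le 4,\ 0 \le j \le 23),
\]
together with $g_{120} = P^5$. Hence closure is exactly the statement $P^5 = \mathrm{id}$.

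Next I would reduce distinctness to a coset statement. The $120$ vertices are the products $P^k h_j$, which I group into the left cosets $\langle P\rangle h_j$ of the cyclic subgroup $\langle P\rangle$. If $P$ has order $5$ (so each coset has exactly five elements) and the $24$ partial products $h_0, \dots, h_{23}$ lie in pairwise distinct left cosets, then those $24$ cosets are disjoint and account for $24 \times 5 = 120 = |S_5|$ distinct elements; conversely, distinctness forces both conditions. So the lemma comes down to: (i) $P$ has order $5$, and (ii) $\{h_0, \dots, h_{23}\}$ is a complete left transversal of $\langle P\rangle$ in $S_5$.

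Claim (i) I expect to settle by direct computation of $P$ as a permutation: an order-$5$ element of $S_5$ is necessarily a single $5$-cycle, so since $5$ is prime it suffices to verify $P \ne \mathrm{id}$ and $P^5 = \mathrm{id}$. For claim (ii) I would make the transversal check hand-sized by choosing a good invariant. Because $\langle P\rangle$ is generated by a $5$-cycle it acts regularly on $\{1, \dots, 5\}$; relabelling the ground set so that $P$ becomes the shift $i \mapsto i+1$ on $\mathbb{Z}/5\mathbb{Z}$, left-multiplication by $P$ simply adds $1$ to every entry of the one-line notation. Consequently the left coset of $h$ is recorded by its one-line word \emph{modulo a simultaneous shift of all entries}, and subtracting the first entry yields a canonical coset label. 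Claim (ii) then amounts to computing the $24$ partial products $h_j$, normalizing, and confirming that the resulting $24$ labels are pairwise distinct.

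The conceptual content — and the step I would present most carefully — is the passage from distinctness to this transversal/coset-invariant formulation; once it is in place, verifying (i) and (ii) is a finite, by-hand computation in $S_5$. Accordingly, I expect the main obstacle not to be any single hard argument but the bookkeeping: correctly evaluating the $24$ partial products under the paper's multiplication convention and checking that their shift-normalized words are all different, where an arithmetic slip would be easy to make and must be guarded against.
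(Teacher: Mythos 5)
Your proposal is correct and follows essentially the same route as the paper: both exploit the block identity $g_{24k+j}=g_{24}^{\,k}g_j$, reduce distinctness to the $24$ partial products lying in pairwise distinct left cosets of the order-$5$ cyclic group $\langle g_{24}\rangle$, and certify coset-distinctness by a canonical representative normalized on the first entry of the one-line notation (the paper takes the unique $\rho_j$ with $\rho_j(1)=1$, which is your shift-normalization). The remaining work in both cases is the same finite table of $24$ products and labels.
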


\begin{proof}
 As a first step, we need to verify that the elements $g_{i}$ are distinct for $i=0,1,\ldots,24$.
 This can be done by checking one-by-one. (See Table $\ref{tab:24}$.)
\begin{center}
 \begin{table}[H]
  \begin{center}
\begin{tabular}{c|c|c|c}
$i$ & $w_i$ & $g_i$ &  $\rho_i$ \\
 \hline
1  &   $\mathtt{V}$ &  45312 & 12453 \\
2  &   $\mathtt{L}$ &  54231 & 13524 \\
3  &   $\mathtt{V}$ &  31254 & 12435 \\
4  &   $\mathtt{R}$ &  13542 & 13542 \\
5  &   $\mathtt{V}$ &  42513 & 13254 \\
6  &   $\mathtt{L}$ &  24351 & 12543 \\
7  &   $\mathtt{V}$ &  51324 & 14253 \\
8  &   $\mathtt{R}$ &  15243 & 15243 \\
9  &   $\mathtt{V}$ &  43215 & 14352 \\
10 &   $\mathtt{R}$ &  34152 & 15234 \\
11 &   $\mathtt{V}$ &  52134 & 15423 \\
12 &   $\mathtt{L}$ &  25413 & 14235 \\
\end{tabular}
~~
\begin{tabular}{c|c|c|c}
$i$ & $w_i$ & $g_i$ &  $\rho_i$ \\
 \hline
13 &   $\mathtt{V}$ &  13425 & 13425 \\
14 &   $\mathtt{L}$ &  31542 & 12354 \\
15 &   $\mathtt{V}$ &  42531 & 13245 \\
16 &   $\mathtt{R}$ &  24315 & 12534 \\
17 &   $\mathtt{V}$ &  15324 & 15324 \\
18 &   $\mathtt{L}$ &  51432 & 14325 \\
19 &   $\mathtt{V}$ &  32451 & 14532 \\
20 &   $\mathtt{R}$ &  23514 & 15432 \\
21 &   $\mathtt{V}$ &  14523 & 14523 \\
22 &   $\mathtt{R}$ &  41235 & 15342 \\
23 &   $\mathtt{V}$ &  35241 & 13452 \\
24 &   $\mathtt{R}$ &  53412 & 12345 \\
\end{tabular}
  \end{center}
  \caption{$g_{i} \mbox{ for } i=1,2,\ldots,24$}
  \label{tab:24}
 \end{table}
\end{center}
Then each $g_i$ can be uniquely expressed as
\[
 g_i=g_{24}^k g_j
\]
where $i=24k+j$ with $0\leq k\leq 4$ and $1\leq j\leq 24$.
So distinctness of $g_i$ can be derived from the disjointedness
of the orbits $\langle g_{24}\rangle g_j$ for $j=1,2,\ldots, 24$.
 Since $g_{24}$ is a cyclic permutation of
order five, each orbit $\langle g_{24}\rangle g_j$ contains
exactly one pemutation $\rho_j$ such that $\rho_j(1)=1$,
which we define to be the representative of the orbit.
Such $\rho_j$ can be obtained in the following way. 
We use $g_3=31254$ for example. Since
\[
 \langle g_{24}\rangle = \left\{{\rm id},~ g_{24}=(1,5,2,3,4),~ 
 g_{24}^2=(1,2,4,5,3),~ g_{24}^3=(1,3,5,4,2),~ g_{24}^4=(1,4,3,2,5)\right\},
\]
we can compute $\rho_3$ for  $g_3=31254$ by multiplying
$g_{24}^2=(1,2,4,5,3)$ which maps $3$ to $1$, and
\[
 g_{24}^2g_3=(1,2,4,5,3)31254 = 12435.
\]
Table $\ref{tab:24}$ shows the representatives of 
the orbits $\langle g_{24}\rangle g_j$, which are distinct.
\end{proof}

\begin{thm}
\label{thm:path}
The state graph ${\rm puz}(\Gamma_{2,3})$ of the cylindrical $5$-puzzle
has a Hamiltonian path which has the following explicit expression.
Let $s$ and $s'$ be defined by $(\ref{eq:s})$ and $(\ref{eq:s'})$
respectively, and let $t$ and $t'$ be their rotations, that is,
\[
t = (\mathtt{L}, \mathtt{V}, \mathtt{R}, 
\mathtt{V}, \mathtt{L}, \mathtt{V}, \mathtt{R}, \mathtt{V}, \mathtt{R}, \mathtt{V}, \mathtt{L},\mathtt{V})
\]
and
\[
t' = (\mathtt{L}, \mathtt{V}, \mathtt{R}, 
\mathtt{V}, \mathtt{L}, \mathtt{V}, \mathtt{R}, \mathtt{V}, \mathtt{R}, \mathtt{V}, \mathtt{R},\mathtt{V}).
\]
Then define $v$ as the word obtained by deleting the last letter from
the word
\[
 (ss')^{14}\cdot s s\cdot (tt')^{15},
\]
so that $v$ has length $719$.
Then $\hat{v}$ traces a Hamiltonian path of the state graph of the cylindrical $5$-puzzle.
\end{thm}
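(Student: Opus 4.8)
The plan is to lift Lemma~\ref{lem:hamS5} through the direct-product structure $G = S_5 \times \mathbb{Z}/2\mathbb{Z} \times \mathbb{Z}/3\mathbb{Z}$. Let $\pi\colon G \to S_5$ be the projection; it sends $\hat{\mathtt L},\hat{\mathtt R},\hat{\mathtt V}$ to $\mathtt L,\mathtt R,\mathtt V$ and carries $\mathrm{puz}(\Gamma_{2,3})$ onto $\mathrm{Cay}(S_5,\{\mathtt L,\mathtt R,\mathtt V\})$. By Lemma~\ref{lem:hamS5} the word $(ss')^5$ traces a Hamiltonian cycle of the base, so its partial products $g_0=\mathrm{id},g_1,\dots,g_{119}$ exhaust $S_5$. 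Since $\hat v$ is by construction a legal walk of $719$ edges, proving the theorem is equivalent to showing that its $720$ prefix-products $v_0=\mathrm{id},v_1,\dots,v_{719}$ are pairwise distinct; they then automatically fill $G$.

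First I would record the holonomy of the blocks. Only $\mathtt V$ changes the $\mathbb{Z}/2\mathbb{Z}$-coordinate (by $1$) and only $\mathtt R,\mathtt L$ change the $\mathbb{Z}/3\mathbb{Z}$-coordinate (by $1$ and $2$), so a letter count gives net displacement $(0,0)$ for $s$ and $(0,2)$ for $s'$; as $t,t'$ are the one-step cyclic rotations of $s,s'$ they share these multisets and hence these displacements. Thus one base winding $(ss')^5$ has holonomy $\delta=(0,1)\in\mathbb{Z}/2\mathbb{Z}\times\mathbb{Z}/3\mathbb{Z}$, an element of order $3$. Now the prefixes $v_0,\dots,v_{359}$ agree with those of $(ss')^{15}$, since $(ss')^{14}ss$ and $(ss')^{15}=(ss')^{14}ss'$ coincide up to position $359$ and differ only at position $360$ (an $\mathtt L$ replacing an $\mathtt R$). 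Writing a step as $n=120q+r$ with $q\in\{0,1,2\}$, one has $\pi(v_n)=g_r$ and holonomy $c_r+q\delta$; since $g_r$ determines $r$ by Lemma~\ref{lem:hamS5} and $\delta$ has order $3$, these $360$ vertices $C_1$ are distinct.

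For the tail, put $Z=v_{360}$. The one-step rotations relating $t,t'$ to $s,s'$ give the word identity $\hat{\mathtt V}\,(tt')^{15}=(ss')^{15}\,\hat{\mathtt V}$; comparing prefixes shows that the $360$ prefix-products of $(tt')^{15}$ are exactly $\hat{\mathtt V}^{-1}C_1$, hence $360$ distinct vertices. The tail vertices $v_{360},\dots,v_{719}$ are their left translates by $Z$, so $C_2:=\{v_{360},\dots,v_{719}\}$ also has $360$ distinct elements. Moreover the identity forces $\phi_G((tt')^{15})=\mathrm{id}$, so the final letter deleted in forming $v$ is precisely the move returning $Z\cdot\phi_G((tt')^{15})=Z$ to $Z$; deleting it merely opens this second closed walk into a path and loses no vertex. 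Hence $\hat v$ visits $C_1\cup C_2$, and it is a Hamiltonian path provided $C_1\cap C_2=\varnothing$.

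The main obstacle is exactly this disjointness; since $|C_1|=|C_2|=360=|G|/2$, it is equivalent to $C_1\sqcup C_2=G$. I would reduce it to a parity statement. The $\mathbb{Z}/2\mathbb{Z}$-coordinate attained at a permutation $h$ along the base cycle defines $p\colon S_5\to\mathbb{Z}/2\mathbb{Z}$; because each $ss'$-block carries an even number of $\mathtt V$'s, $p$ is constant on the right cosets $\langle g_{24}\rangle h$ and is therefore determined by the $24$ representatives $\rho_j$ of Table~\ref{tab:24}. One checks $C_1=\{(h,a,b):a=p(h)\}$, while the identity above gives $C_2=Z\,\hat{\mathtt V}^{-1}C_1$; unwinding the translations, the partition $C_1\sqcup C_2=G$ becomes the single relation $p(m_0 h)=p(h)$ for all $h$, with $m_0=\mathtt V\,z^{-1}$ and $z=\pi(Z)$. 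This is a finite assertion, and the coset structure renders it checkable directly from the data of Table~\ref{tab:24}. I expect this verification, rather than any of the structural steps, to carry the real weight of the proof; once it is in hand, all $720$ prefixes are distinct and $\hat v$ is a Hamiltonian path.
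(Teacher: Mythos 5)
Your argument is sound and, at its decisive step, takes a genuinely different route from the paper's. The paper never analyzes the two halves of $\hat v$ directly: it builds the $2$-cycle cover from the two translates $a_0P$ and $a_1P$ of the \emph{same} prefix set $P=\{q_0,\dots,q_{359}\}$ of $(ss')^{15}$, with $a_0=(\mathrm{id},0,0)$ and $a_1=(\mathrm{id},1,0)$, so that disjointness is free from the $\mathbb{Z}/2\mathbb{Z}$-coordinate; the burden then shifts to showing that changing the final $\mathtt{R}$ to $\mathtt{L}$ lands on the second cycle at a position $\equiv 1 \pmod{24}$, so that the rotated word $(tt')^{15}$ traces that cycle from there --- a point the paper asserts rather than proves. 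You instead take $C_1=\{v_0,\dots,v_{359}\}$ and $C_2=\{v_{360},\dots,v_{719}\}$ as they actually occur, use the clean identity $\hat{\mathtt{V}}\,(tt')^{15}=(ss')^{15}\,\hat{\mathtt{V}}$ to get $C_2=Z\hat{\mathtt{V}}^{-1}C_1$, and push all the difficulty into the parity relation $p(m_0h)=p(h)$. Your reduction is correct (the implicit fact that the $\mathbb{Z}/2\mathbb{Z}$-part of $Z$ vanishes holds because $(ss')^{14}ss$ contains $180$ letters $\mathtt{V}$), and the verification you defer does succeed --- in fact more cheaply than you anticipate: $z=\mathtt{R}^{-1}\mathtt{L}=(3,4,5)$, and a one-line computation gives $z\mathtt{V}=53412=g_{24}$, hence $m_0=(z\mathtt{V})^{-1}=g_{24}^{-1}\in\langle g_{24}\rangle$, so $p(m_0h)=p(h)$ follows at once from the coset-invariance of $p$ you already established, with no appeal to the table of representatives. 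You should carry out that short computation, since as written the final step is only claimed to be checkable rather than checked; with it in place your proof is complete and arguably tighter than the paper's at the splicing step. (Incidentally, your holonomy $(0,2)$ for $ss'$ is the correct value --- the word has seven $\mathtt{R}$s and five $\mathtt{L}$s, not the reverse as the paper states --- but both versions give an element of order $3$, so nothing downstream changes.)
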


\begin{proof}
 Let $w$ be defined by $(\ref{eq:w})$.
 Then, as we have shown in Lemma $\ref{lem:hamS5}$,
 $\phi_{S_5}(w)={\rm id}\in S_5$.
 The word $ss'$ of length $24$ contains seven ${\mathtt R}$s and 
 five ${\mathtt L}$s.
 Therefore, the ${\mathbb Z}/3{\mathbb Z}$-part
 of $\phi_G(\hat{ss'})$ is
 \[
  5\times 1 + 7\times 2 = 1\in {\mathbb Z}/3{\mathbb Z}.
 \]
 Hence the ${\mathbb Z}/3{\mathbb Z}$-part of
 $\phi_G(\hat{w})$ is $5\times{1}=2\in {\mathbb Z}/3{\mathbb Z}$.
 In the same manner, we see that
 the ${\mathbb Z}/2{\mathbb Z}$-part of
 $\phi_G(\hat{w})$ is $0\in {\mathbb Z}/2{\mathbb Z}$.
 Define 
 \[
  d=(d_1,d_2,\ldots,d_{360})=\hat{w}^3\in G^{360}.
 \]
 and
 \[
 q_i=\phi_G(d_1\cdots d_i).
 \]
 Let
 \[
 a_0 = \left({\rm id}, 0, 0\right), ~~~
 a_1 = \left({\rm id}, 1, 0\right)\in G,
 \]
 and define
 \[
  C_i = (a_i, a_iq_1, a_iq_2, \ldots, a_iq_{360})
 \]
 for $i=0,1$. Then, now we claim that $C_1$ and $C_2$ form
 a $2$-cycle cover of the state graph ${\rm puzz}$.
 To prove this, it suffices to show that each $C_i$ is
 a simple cycle and $C_1$ and $C_2$ do not intersect.
 Suppose for the sake of contradiction, 
 that there exist $i$ and $j$ such that 
 $q_i=q_j$ and $1\leq i < j\leq 360$.
 Then, since their $S_5$-parts must coincide,
 there exists an integer $k\in\{1,2\}$ such that $j=i+120k$,
 where the ${\mathbb Z}/3{\mathbb Z}$-parts do not
 coincide. Thus $C_0$ and $C_1$ are both simple cycles.
 If we have $a_0q_i=a_1q_j$, again, since 
 since their $S_5$-parts must coincide,
 there exists an integer $k\in\{1,2\}$ such that $j=i+120k$,
 where the ${\mathbb Z}/2{\mathbb Z}$-parts do not
 coincide.

 Then we splice $C_0$ and $C_1$ to form a Hamiltonian path
 in ${\rm puz}(\Gamma_{2,3})$.
 To break the cycle of length $360$,
 we modify the last letter $d_{360}={\mathtt R}$ of $d$ to ${\mathtt L}$.
 Then
 \[
  \phi_G(d_1,d_2,\ldots,d_{359},\hat{{\mathtt L}}) =
 a_0q_{360}\hat{\mathtt R}^{-1}\hat{{\mathtt L}}
 =
 \hat{{\mathtt L}}^2 
 =(12453,0,1) = (g_1, 0, 1).
 \]
 Therefore, setting $v'=(v_1,\ldots, v_{360})=(tt')^{15}$,
 we obtain
 \[
 (
 \hat{{\mathtt L}}^2,
 \hat{{\mathtt L}}^2\phi_G(v_1),
 \hat{{\mathtt L}}^2\phi_G(v_1,v_2),
 \ldots,
 \hat{{\mathtt L}}^2\phi_G(v_1,\ldots,v_{360})
 ).
 \]
 is a simple cycle in ${\rm puz}(\Gamma_{2,3})$
 starting from and terminating
 at $\hat{{\mathtt L}}^2$, which is a cyclic shift of $C_1$
 and disjoint with $C_0$.
 Thus, 
 the word obtained by removing the last letter from the word
 $(ss')^{14}ss(tt')^{15}$ traces out a Hamiltonian path
 of ${\rm puz}(\Gamma_{2,3})$.
\end{proof}

Although the verification of the following theorem
can be performed algorithmically,
we provide an explicit list of the 720 states (Supplement S1) 
as a formal certificate of Theorem $\ref{thm:cycle}$.
\begin{thm}
\label{thm:cycle}
 The state graph ${\rm puz}\left(\Gamma_{2,3}\right)$
 has a Hamiltonian cycle, which is given by the 
 word $\hat{c}$ on $\{\hat{{\mathtt L}}, \hat{{\mathtt R}}, \hat{{\mathtt V}}\}$, where
 \[
 c = \mathtt{R} \mathtt{V} \mathtt{L} \mathtt{V} \mathtt{R} \mathtt{V} \mathtt{L} \mathtt{V} \mathtt{R} \mathtt{R} \mathtt{R} \mathtt{V} \mathtt{L} \mathtt{V} \mathtt{R} \mathtt{V} \mathtt{R} \mathtt{V} \mathtt{L} \mathtt{V} \mathtt{L} \mathtt{V} \mathtt{R} \mathtt{V} \mathtt{L} \mathtt{V} \mathtt{L} \mathtt{L} \mathtt{V} \mathtt{R} \mathtt{V} \mathtt{R} \mathtt{V} \mathtt{L} \mathtt{L} \mathtt{L} \mathtt{V} \mathtt{L} \mathtt{V} \mathtt{R} \mathtt{R} \mathtt{R} \mathtt{V} \mathtt{R} \mathtt{V} \mathtt{R} \mathtt{V} \mathtt{R}.
 \]
\end{thm}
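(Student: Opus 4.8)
The plan is to run the orbit argument of Lemma~\ref{lem:hamS5} one level up, in the full group $G = S_5\times{\mathbb Z}/2{\mathbb Z}\times{\mathbb Z}/3{\mathbb Z}$. First I would record the letter statistics of the length-$48$ word $c$: it has sixteen $\mathtt R$'s, twelve $\mathtt L$'s, and twenty $\mathtt V$'s. These pin down two of the three coordinates of $\Gamma := \phi_G(\hat c)$ immediately: the number of $\mathtt V$'s is even, so the ${\mathbb Z}/2{\mathbb Z}$-part of $\Gamma$ is $0$, and $16\cdot 1 + 12\cdot 2 \equiv 1 \pmod 3$, so its ${\mathbb Z}/3{\mathbb Z}$-part is $1$. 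Setting $\gamma := \phi_{S_5}(c)$, I would then multiply out the $48$ factors a single time and confirm that $\gamma$ is a $5$-cycle; together with the above this shows that $\Gamma = (\gamma, 0, 1)$ has order exactly $15$.

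Once $\Gamma$ has order $15$, the trace of $\hat c^{15}$ consists of the $720$ group elements $\Gamma^k p_j$, where $p_j = \phi_G(\hat c_1,\ldots,\hat c_j)$ is the $j$-th partial product inside one copy of $c$ ($0\le j\le 47$) and $0\le k\le 14$. Because left multiplication by $\Gamma$ is free and $\Gamma^{15}={\rm id}$, this is a closed walk of length $720$ whose successive vertices differ by a generator, so it is a Hamiltonian cycle exactly when the $48$ orbits $\langle\Gamma\rangle p_j$ are pairwise disjoint. Each orbit has size $15$ and $48\times 15 = 720 = |G|$, so disjointness is the same as the orbits partitioning $G$; this is precisely the statement that $c$ descends to a Hamiltonian cycle in the $48$-vertex quotient $G/\langle\Gamma\rangle$ and lifts back because its holonomy $\Gamma$ generates a subgroup of order $15 = |G|/48$.

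To certify disjointness I would attach to each orbit a canonical representative, just as $\rho_j$ was used in the lemma. From $\Gamma^k(\sigma,x,y) = (\gamma^k\sigma, x, y+k)$ the ${\mathbb Z}/2{\mathbb Z}$-part is constant along an orbit, and among the five orbit elements with ${\mathbb Z}/3{\mathbb Z}$-part $0$ the $S_5$-parts sweep out the entire left coset $\langle\gamma\rangle\sigma$. Since $\gamma$ is a $5$-cycle, exactly one of these fixes the symbol $1$, and I take that element $(\rho_j, x_j, 0)$ with $\rho_j(1)=1$ as the representative. There are exactly $24\times 2 = 48$ such triples---$24$ choices for a permutation fixing $1$ and $2$ for the ${\mathbb Z}/2{\mathbb Z}$-part---which matches the number of orbits, so it is enough to list the $48$ representatives and check that they are distinct, i.e.\ that they exhaust all possibilities. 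I would display this in a table in the style of Table~\ref{tab:24}.

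The obstacle here is computational rather than structural: the only hypothesis that genuinely needs checking is that $\gamma$ is a $5$-cycle, since this is what forces the common orbit size $15 = |G|/48$ and what guarantees a unique representative in each orbit. Were $\gamma$ to have any other cycle type, the orbit sizes would fail to tile $G$ and the whole scheme would break down. Granting this one fact, evaluating the $48$-fold product, forming the partial products $p_j$, and reducing each to its representative is routine bookkeeping that can be verified by hand, exactly as in Lemma~\ref{lem:hamS5}.
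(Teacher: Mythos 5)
Your proposal is correct and is essentially the verification scheme the paper gestures at but omits (``a larger table than Table~\ref{tab:24}''): you rerun the orbit-representative argument of Lemma~\ref{lem:hamS5} in $G$ rather than in $S_5$, with the order-$15$ holonomy $\Gamma=(\gamma,0,1)$ playing the role of $g_{24}$, which is exactly the quotient-lifting by the $15$-element subgroup $K_1$ described in Section~4 (though $\langle\Gamma\rangle$ is only conjugate to, not equal to, $K_1$, and being non-normal the ``quotient'' must be read as the coset space $\langle\Gamma\rangle\backslash G$). Your one structural hypothesis checks out: with the paper's product convention one computes $\gamma=\phi_{S_5}(c)=35214=(1,3,2,5,4)$, a $5$-cycle (the partial products return to $\mathrm{id}$ in $S_5$ already at step $45$ but not in $G$, consistent with the paper's remark that repeating $c$ is not Hamiltonian in $\mathrm{Cay}(S_5,\{\mathtt{L},\mathtt{R},\mathtt{V}\})$), so $\Gamma$ has order $15$ and all that remains is the $48$-row table of representatives $(\rho_j,x_j,0)$, exactly as you describe.
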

\begin{proof}
 Note that any repetition of the word $c$ does not trace a Hamiltonian cycle
 of ${\rm Cay}(S_5, \{{\mathtt L}, {\mathtt R}, {\mathtt V}\})$.
 The proof of this theorem proceeds in essentially the same way as that of Lemma $\ref{lem:hamS5}$, 
 except that a larger table of size $48$ is needed. 
 Table $\ref{tab:large}$ shows $g_i=\phi_G(\hat{c}_1,\ldots,\hat{c}_i)$ for $i=1,2,\ldots,48$ and
 the representative $\rho_i$ of $g_i$'s orbit $\langle g_{48}\rangle g_i$, which is defined
 as follows:
 As can be verified in Table $\ref{tab:large}$,
 \[
 g_{48}=\phi_G(\hat{c})=(35214, 0, 1) \in G = S_5 \times {\mathbb Z}/2{\mathbb Z} \times {\mathbb Z}/3{\mathbb Z}.
 \]
 Since $35214=(1,3,2,5,4)$ has order five and $1\in {\mathbb Z}/3{\mathbb Z}$ has order three,
 $g_{48}$ generates the cyclic subgroup $\langle g_{48}\rangle$ of order $15$.
 The representative $\rho_i=(\sigma_i, x_i, y_i)$ is defined to be the
 unique element which satisfies $\sigma_i(1)=1$ and $y_i=0$.
 It can be verified that the representatives $\rho_i$ are distinct.
\begin{center}
 \begin{table}[H]
  \begin{center}
   \begin{tabular}{c|c|c|c}
    $i$ & $\hat{c}_i$ & $g_i$ & $\rho_i$\\
    \hline
   $1 $ &   {\tt R} &  $(21453,0,1)$ & $(15234,0,0)$\\
   $2 $ &   {\tt V} &  $(53421,1,1)$ & $(15342,1,0)$\\
   $3 $ &   {\tt L} &  $(35142,1,0)$ & $(12453,1,0)$\\
   $4 $ &   {\tt V} &  $(42135,0,0)$ & $(15324,0,0)$\\
   $5 $ &   {\tt R} &  $(24351,0,1)$ & $(12435,0,0)$\\
   $6 $ &   {\tt V} &  $(51324,1,1)$ & $(12543,1,0)$\\
   $7 $ &   {\tt L} &  $(15432,1,0)$ & $(15432,1,0)$\\
   $8 $ &   {\tt V} &  $(32415,0,0)$ & $(13542,0,0)$\\
   $9 $ &   {\tt R} &  $(23154,0,1)$ & $(14532,0,0)$\\
   $10$ &   {\tt R} &  $(32541,0,2)$ & $(13254,0,0)$\\
   $11$ &   {\tt R} &  $(23415,0,0)$ & $(14253,0,0)$\\
   $12$ &   {\tt V} &  $(15423,1,0)$ & $(15423,1,0)$\\
   $13$ &   {\tt L} &  $(51342,1,2)$ & $(12534,1,0)$\\
   $14$ &   {\tt V} &  $(42351,0,2)$ & $(15243,0,0)$\\
   $15$ &   {\tt R} &  $(24513,0,0)$ & $(12354,0,0)$\\
   $16$ &   {\tt V} &  $(13524,1,0)$ & $(13524,1,0)$\\
   $17$ &   {\tt R} &  $(31245,1,1)$ & $(14352,1,0)$\\
   $18$ &   {\tt V} &  $(45231,0,1)$ & $(14523,0,0)$\\
   $19$ &   {\tt L} &  $(54123,0,0)$ & $(13245,0,0)$\\
   $20$ &   {\tt V} &  $(23154,1,0)$ & $(14532,1,0)$\\
   $21$ &   {\tt L} &  $(32415,1,2)$ & $(13542,1,0)$\\
   $22$ &   {\tt V} &  $(15432,0,2)$ & $(15432,0,0)$\\
   $23$ &   {\tt R} &  $(51324,0,0)$ & $(12543,0,0)$\\
   $24$ &   {\tt V} &  $(24351,1,0)$ & $(12435,1,0)$\\
   \end{tabular}~~
   \begin{tabular}{c|c|c|c}
    $i$ & $\hat{c}_i$ & $g_i$ & $\rho_i$\\
    \hline
   $25$ &   {\tt L} &  $(42135,1,2)$ & $(15324,1,0)$\\
   $26$ &   {\tt V} &  $(35142,0,2)$ & $(12453,0,0)$\\
   $27$ &   {\tt L} &  $(53214,0,1)$ & $(15423,0,0)$\\
   $28$ &   {\tt L} &  $(35421,0,0)$ & $(12534,0,0)$\\
   $29$ &   {\tt V} &  $(21435,1,0)$ & $(15243,1,0)$\\
   $30$ &   {\tt R} &  $(12354,1,1)$ & $(12354,1,0)$\\
   $31$ &   {\tt V} &  $(54312,0,1)$ & $(13524,0,0)$\\
   $32$ &   {\tt R} &  $(45123,0,2)$ & $(14352,0,0)$\\
   $33$ &   {\tt V} &  $(23145,1,2)$ & $(14523,1,0)$\\
   $34$ &   {\tt L} &  $(32514,1,1)$ & $(13245,1,0)$\\
   $35$ &   {\tt L} &  $(23451,1,0)$ & $(14235,1,0)$\\
   $36$ &   {\tt L} &  $(32145,1,2)$ & $(13452,1,0)$\\
   $37$ &   {\tt V} &  $(45132,0,2)$ & $(14325,0,0)$\\
   $38$ &   {\tt L} &  $(54213,0,1)$ & $(13425,0,0)$\\
   $39$ &   {\tt V} &  $(13254,1,1)$ & $(13254,1,0)$\\
   $40$ &   {\tt R} &  $(31542,1,2)$ & $(14253,1,0)$\\
   $41$ &   {\tt R} &  $(13425,1,0)$ & $(13425,1,0)$\\
   $42$ &   {\tt R} &  $(31254,1,1)$ & $(14325,1,0)$\\
   $43$ &   {\tt V} &  $(54231,0,1)$ & $(13452,0,0)$\\
   $44$ &   {\tt R} &  $(45312,0,2)$ & $(14235,0,0)$\\
   $45$ &   {\tt V} &  $(12345,1,2)$ & $(12345,1,0)$\\
   $46$ &   {\tt R} &  $(21453,1,0)$ & $(15234,1,0)$\\
   $47$ &   {\tt V} &  $(53421,0,0)$ & $(15342,0,0)$\\
   $48$ &   {\tt R} &  $(35214,0,1)$ & $(12345,0,0)$\\
   \end{tabular}
  \end{center}
  \caption{$g_i$ for $i=1,2,\ldots, 48$ and their $\langle g_{48}\rangle$-orbit
  representatives}
  \label{tab:large}
 \end{table}
\end{center}
 
\end{proof}

\section{Quotient-Lifting method}
This section shows a heuristic to generate  
Hamiltonian cycles and paths.
The Hamiltonian path and the Hamiltonian cycle 
shown in the previous section
can be found using this heuristic.

Let $H$ be a group and $S$ be a generating set of $H$.
Then finding a Hamiltonian cycle in the Cayley graph $X={\rm Cay}(H, S)$
is generally a computationally difficult problem.
The naive backtracking does not work for general large graphs  \cite{ruskey1995hamiltonicity}.
Our heuristics is simple.

\begin{enumerate}
 \item Choose an {\em appropriate} subgroup $K$ of the automorphism group ${\rm Aut}(X)$
       of $X$, which acts freely on $V(X)$ from left,
       and let $\pi$ be the covering map  (or the natural projection)
       from $X$ to
       the quotient graph $Y=K\backslash X$.
 \item If we can find a Hamiltonian cycle in $Y$, lift it
       back to $X$ using $\pi^{-1}$.
 \item If the cycles in the lift  do not form a Hamiltonian cycle of $X$,
       try to splice them to form a Hamiltonian cycle or a Hamiltonian path.
\end{enumerate}

We do not know how to find an {\em appropriate} subgroup $K$ for general $X$.
Here we show two successful examples in $X={\rm puz}(\Gamma_{2,3})$.

Let $K$ be a subgroup of $G=S_5\times {\mathbb Z}/2{\mathbb Z} \times {\mathbb Z}/3{\mathbb Z}$.
Then $K$ naturally acts on the vertex set $V(X)$ as a subgroup
of ${\rm Aut}(X)$ and the vertex set $V(Y)$
is the cosets $\{Kg \,|\, g\in G\}$.
There is an edge from $Ka$ to $Kb$
if and only if there exists $\xi \in \{\hat{\mathtt L}, \hat{\mathtt R}, \hat{\mathtt V}\}$,
such that $Ka\xi = Kb$.

Let the subgroup be
\[
 K_0=\langle(1,2,3,4,5)\rangle\times {\mathbb Z}/2{{\mathbb Z}} \times {\mathbb Z}/3{\mathbb Z}
 \subset G.
\]
Then, the quotient graph  $K_0\backslash{X}$ has $24$ vertices,
in which finding a Hamiltonian cycle is computationally feasible.
We identify sequences over  $\{{\mathtt L}, {\mathtt R}, {\mathtt V}\}$
under the group action generated by
rotation, reversal, and reflection defined by
\begin{itemize}
 \item The {\em rotation} of $x=(x_1,\ldots, x_k)$ is $(x_2, x_3,\ldots, x_k, x_1)$.
 \item The {\em reversal} of $x=(x_1,\ldots, x_k)$ is $(x_k,\ldots, x_2, x_1)$.
 \item The {\em reflection} of $x\in \{{\mathtt L}, {\mathtt R}, {\mathtt V}\}^k$ is the word obtained from $x$ by interchanging ${\mathtt L}$ and ${\mathtt R}$ in $x$.
\end{itemize}
Then $K_0\backslash X$ has only four Hamiltonian cycles listed in Table $\ref{tab:s5hamilton}$,
each of which exhibits an 
interesting symmetric pattern, and 
whose five-fold repetitions
trace Hamiltonian cycles
of ${\rm Cay}(S_5,\{{\mathtt L} , {\mathtt R}, {\mathtt V}\})$.
This suggests the presence of
a combinatorial grammar underlying Hamiltonian paths.
Two of the Hamiltonian cycles or $K_0\backslash{X}$
are lifted to form $2$-cycle covers of $X$, and the rest are
lifted to form $6$-cycle covers of $X$.

\begin{table}[H]
 \begin{center}
  \begin{tabular}{c|c}
   Hamiltonian cycle of $K_0\backslash{X}$ & lift to $X$\\
   \hline
   $\mathtt{VRVLVRVLVLVR ~ VRVLVRVLVLVL}$ & $2$-cycle cover \\
   $\mathtt{VLVLVLVRVLVL ~ VRVRVRVLVRVR}$ & $2$-cycle cover \\
   $\mathtt{LLLLLVLVRRRV ~ LLLVRVRRRRRV}$ & $6$-cycle cover \\
   $\mathtt{LLLLLVRRRRRV ~ LVRRRVLLLVRV}$ & $6$-cycle cover \\
  \end{tabular}
  \caption{Hamiltonian cycles of $K_0\backslash{X}$ and their lifts to $X$}
  \label{tab:s5hamilton}
 \end{center}
\end{table}

For the subgroup $K_1$ defined by
\[
 K_1=\langle(1,2,3,4,5)\rangle\times \{0\} \times {\mathbb Z}/3{\mathbb Z},
\]
the quotient graph $K_1\backslash X$ has $48$ vertices,
in which finding a Hamiltonian cycle is still computationally feasible.
By lifting a Hamiltonian cycle in $K_1\backslash X$, we have obtained the Hamiltonian cycle of $X$,
which is shown in Theorem $\ref{thm:cycle}$.
The quotient $K_1\backslash X$ and ${\rm puz}(\Gamma_{2,3})$
are illustrated in Figure $\ref{fig:QLM}$.

\begin{center}
 \begin{figure}[H]
  \begin{center}
   \includegraphics[width=6cm]{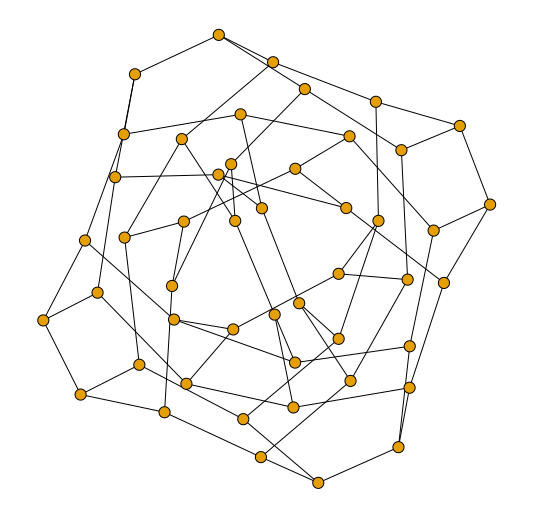}
   \includegraphics[bb=0 0 758 829,clip,width=6cm]{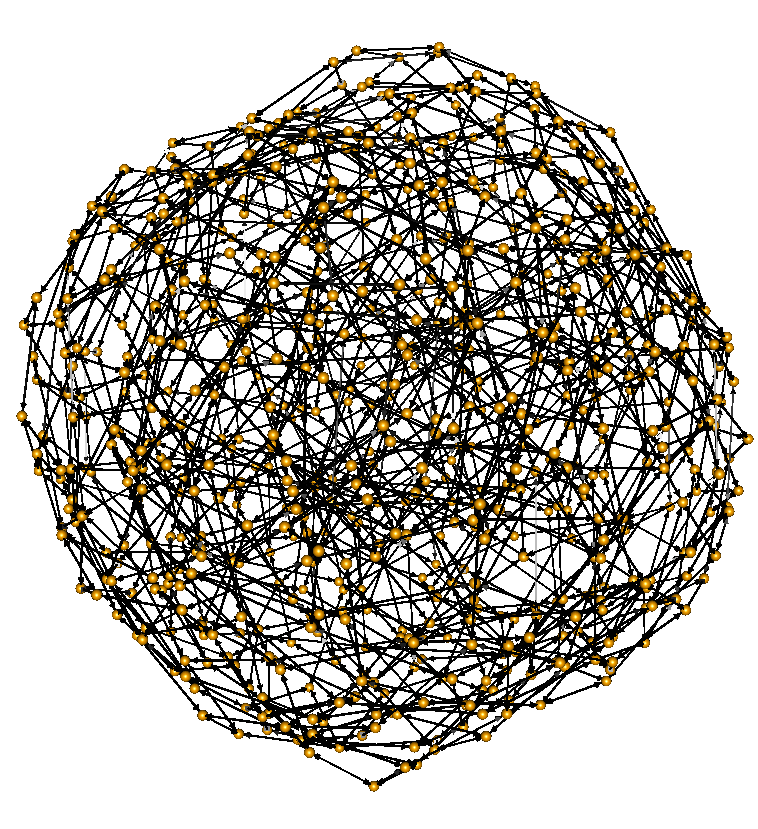}
  \end{center}
  \caption{
  A Hamiltonian cycle on the quotient graph $K_1\backslash X$
  of order $48$ (left side) lifts to a single Hamiltonian cycle of length $720$ in the full state graph 
  ${\rm puz}(\Gamma_{2,3})$ (right side).
  This visualization illustrates the essence of the quotient-lifting method.}
  \label{fig:QLM}
 \end{figure}
\end{center}

\begin{rmk}
The remarkable point is that the lifted cycle remains single despite being $15$ times longer.
This can be explained by the fact that the monodromy permutation induced on the fiber is a $15$-cycle.
\end{rmk}

\subsubsection*{Future work}
The same quotient-lifting framework can naturally be extended to larger puzzles such as the 7-puzzle, where the monodromy becomes more intricate.

\section*{Acknowledgement}
The author would like to express sincere gratitude to the anonymous reviewers for their careful reading of the manuscript and for providing many valuable suggestions that helped to improve the paper.


\end{document}